\definecolor{darkred}{RGB}{139,0,0}
\definecolor{darkblue}{RGB}{0,0,139}
\definecolor{darkgreen}{RGB}{0,100,0}
\let\@wraptoccontribs\wraptoccontribs
\newtheorem{thm}{Theorem}[section]
\newtheorem{lem}[thm]{Lemma}
\theoremstyle{definition}
\theoremstyle{remark}
\newtheorem{rem}[thm]{Remark}
\newtheorem{example}[thm]{Example}
\numberwithin{equation}{section}
\newcommand{\bL}{\mathbb{L}}
\newcommand{\bN}{\mathbb{N}}
\newcommand{\bQ}{\mathbb{Q}}
\newcommand{\bR}{\mathbb{R}}
\newcommand{\bZ}{\mathbb{Z}}
\newcommand{\gC}{\bold{C}}
\newcommand{\gM}{\bold{M}}
\newcommand{\gR}{\bold{R}}
\newcommand{\gS}{\bold{S}}
\newcommand{\gZ}{\bold{Z}}
\newcommand{\fS}{\mathfrak{S}}
\newcommand\lra{\longrightarrow}
\newcommand\hocolim{\operatorname*{hocolim}}
\newcommand\Com{\mathbf{Com}}
\newcommand{\bk}{\mathbbm{k}}
\title{Configuration spaces as commutative monoids}
\author{Oscar Randal-Williams}
\email{o.randal-williams@dpmms.cam.ac.uk}
\address{Centre for Mathematical Sciences\\
Wilberforce Road\\
Cambridge CB3 0WB\\
UK}
\email{maqho@ust.hk}
\address{Department of Mathematics
Hong Kong University of Science and Technology
Clear Water Bay, Hong Kong}
\begin{document}

\begin{abstract}
After 1-point compactification, the collection of all unordered configuration spaces of a manifold admits a commutative multiplication by superposition of configurations. We explain a simple (derived) presentation for this commutative monoid object. Using this presentation, one can quickly deduce Knudsen's formula for the rational cohomology of configuration spaces, prove rational homological stability, and understand how automorphisms of the manifold act on the cohomology of configuration spaces. Similar considerations reproduce the work of Farb--Wolfson--Wood on homological densities.
\end{abstract}

\maketitle

\section{Introduction}

Let $M$ be the interior of a connected compact manifold with boundary. The 1-point compactification of the space $C_n(M)$ of unordered configurations in $M$ may be written as
\begin{equation}\label{eq:Cplus}
C_n(M)^+ = \left[\frac{(M^+)^{\wedge n}}{\text{locus where two points coincide}}\right]_{\fS_n},
\end{equation}
the quotient formed in pointed spaces. Not-necessarily-disjoint union of unordered configurations defines a \emph{superposition product}
$$C_n(M)^+ \wedge C_{n'}(M)^+ \lra C_{n+n'}(M)^+$$
which is associative, commutative, and unital. This gives a unital commutative monoid object in the symmetric monoidal category $\mathsf{Top}_*^\bN$ of $\bN$-graded pointed spaces: 
$$\gC(M) : n \longmapsto C_n(M)^+.$$
The goal of this note is to explain and exploit this algebraic structure.

In the following, for a pointed space $X$ we write $X[n]$ for the $\bN$-graded pointed space which consists of $X$ in grading $n$ and the point in all other gradings, and write $\Com(Y)$ for the free unital commutative monoid on an object $Y \in \mathsf{Top}_*^\bN$.

\begin{thm}\label{thm:MainNoLabels}
There is a pushout square
\begin{equation*}
\begin{tikzcd}
\Com(M^+[2]) \rar{\epsilon} \dar{\Delta} & S^{0}[0] \dar \\
\Com(M^+[1]) \rar &\gC(M)
\end{tikzcd}
\end{equation*}
of unital commutative monoids in $\mathsf{Top}_*^\bN$, where $\epsilon$ is the augmentation and $\Delta$ is induced by the diagonal inclusion $M^+ \to [M^+ \wedge M^+]
_{\fS_2} = \Com(M^+[1])(2)$. Furthermore, this square is a homotopy pushout, i.e.\ there is an induced equivalence
$$\Com(M^+[1]) \otimes^\bL_{\Com(M^+[2])} S^{0}[0] \overset{\sim}\lra  \gC(M).$$
\end{thm}
That the square is a strict pushout of unital commutative monoids is elementary: it means identifying $\gC(M)$ as the quotient of the based symmetric power monoid of $M^+$ by the ideal given by those tuples which contain a repeated element, which is a reformulation of \eqref{eq:Cplus}. The content of the theorem is that the square is also a homotopy pushout, rendering it amenable to homological calculation.

More generally, let $\pi : L \to M$ be a vector bundle, and let
$$C_n(M ; L)^+ = \left[\frac{(L^+)^{\wedge n}}{\text{locus where two points have the same projection in $M$}}\right]_{\fS_n}.$$
These assemble in the same way to a unital commutative monoid object $\gC(M;L)$. (Of course more general spaces of labels can be implemented too, but the above suffices for us.) 

\begin{thm}\label{thm:MainC}
There is a pushout square
\begin{equation*}
\begin{tikzcd}
\Com([(L \oplus L)^+]_{\fS_2}[2]) \rar{\epsilon} \dar{\Delta} & S^{0}[0] \dar \\
\Com(L^+[1]) \rar &\gC(M;L)
\end{tikzcd}
\end{equation*}
of unital commutative monoids in $\mathsf{Top}_*^\bN$, where $\epsilon$ is the augmentation and $\Delta$ is induced by the diagonal inclusion $[(L \oplus L)^+]_{\fS_2} \to [L^+ \wedge L^+]
_{\fS_2} = \Com(L^+[1])(2)$. Furthermore, this square is a homotopy pushout, i.e.\ there is an induced equivalence
$$\Com(L^+[1]) \otimes^\bL_{\Com([(L \oplus L)^+]_{\fS_2}[2])} S^{0}[0] \overset{\sim}\lra  \gC(M;L).$$
\end{thm}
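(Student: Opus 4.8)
Write $V:=[(L\oplus L)^+]_{\fS_2}$, so the upper-left corner is $\Com(V[2])$. The plan is to first identify the strict pushout of the square in the $1$-category of commutative monoids in $\mathsf{Top}_*^\bN$, and then to promote this to a homotopy pushout by resolving the right-hand vertical map by a two-sided bar construction, analysed one grading at a time.

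For the strict statement, recall that $S^0[0]$ is the monoidal unit, so the pushout of $\Com(L^+[1])\xleftarrow{\Delta}\Com(V[2])\xrightarrow{\epsilon}S^0[0]$ is the quotient of $\Com(L^+[1])$ by the monoid ideal generated by the image under $\Delta$ of the augmentation ideal of $\Com(V[2])$. As $\Delta$ is a map of monoids, that ideal is already generated by the image of the generating object $V[2]$, namely by $V=[(L\oplus L)^+]_{\fS_2}\hookrightarrow[L^+\wedge L^+]_{\fS_2}=\Com(L^+[1])(2)$, whose image is the locus in which the two points of a $2$-point configuration have equal projection to $M$. In grading $n$ the ideal it generates is the image under multiplication of $\Com(L^+[1])(n-2)\wedge V$, and after the $\fS_n$-symmetrisation built into $\Com(L^+[1])(n)=[(L^+)^{\wedge n}]_{\fS_n}$ this is exactly the fat diagonal, whose collapse defines $\gC(M;L)(n)=C_n(M;L)^+$. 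Hence the square is a pushout of commutative monoids, the canonical map out of the homotopy pushout factors through this identification of the strict pushout with $\gC(M;L)$, and the content is that this factoring map is an equivalence.

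To prove that, I would model $\Com(L^+[1])\otimes^\bL_{\Com(V[2])}S^0[0]$ by the two-sided bar construction $B_\bullet=B_\bullet(\Com(L^+[1]),\Com(V[2]),S^0[0])$. Since the forgetful functor to $\mathsf{Top}_*^\bN$ preserves geometric realisations and $\Com$ takes coproducts to tensor products, $|B_\bullet|(n)$ is the realisation of a simplicial pointed space whose $p$-simplices split, over the ways of writing $n=n_0+2(n_1+\dots+n_p)$, into wedge summands $[(L^+)^{\wedge n_0}]_{\fS_{n_0}}\wedge[V^{\wedge n_1}]_{\fS_{n_1}}\wedge\dots\wedge[V^{\wedge n_p}]_{\fS_{n_p}}$; concretely a simplex is a weighted, $2$-point-coloured configuration of total weight $n$ in which certain points have been bundled into coincident pairs recorded by the bar coordinates, with $d_0$ merging the first bar level into $\Com(L^+[1])$ along $\Delta$ and $d_p$ deleting the last level via $\epsilon$. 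I would then filter $B_\bullet(n)$ by the partition of $\{1,\dots,n\}$ recording which of the $n$ points are ultimately forced to share a location in $M$; this presents $B_\bullet(n)$ as a filtered simplicial space, and it suffices to check that the comparison map to $\gC(M;L)(n)$ is an equivalence on associated graded. Over the finest (discrete) partition the subquotient is $[(L^+)^{\wedge n}]_{\fS_n}/(\text{fat diagonal})=C_n(M;L)^+$ and is constant in $p$; over a strictly coarser partition it ought to be contractible, the forced coincidences there having been absorbed into $\Com(V[2])$-generators that the bar resolution cancels.

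I expect the main obstacle to be precisely this contractibility of the coarser strata: it is the homotopical incarnation of the statement that the single relation defining $\gC(M;L)$ is ``regular'', equivalently that $\Com(L^+[1])$ is flat (cofibrant) as a $\Com(V[2])$-module along $\Delta$, and it is the step where the local structure of $M$ enters. I would attack it by a local-to-global argument, using that $\gC(M;L)$ and all the terms of $B_\bullet$ are functorial for codimension-zero embeddings of manifolds and well-behaved under gluing, so as to reduce the contractibility to the case $M=\bR^d$, where the relevant Koszulness can be established directly — the translation and scaling action of $\bR^d$ on configurations producing the required null-homotopies — together with an elementary check when $\dim M=0$. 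An alternative that avoids the bar construction is to replace the span by a cofibrant one, factoring $\Delta$ as a cofibration of commutative monoids followed by a weak equivalence, and then to match the fat-diagonal stratifications of the two sides directly.
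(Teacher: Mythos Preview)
Your identification of the strict pushout is correct and matches the paper's Lemma~3.3 exactly: the relative tensor product $\gR\otimes_\gS S^0[0]$ is the quotient of $\gR(n)=[(L^+)^{\wedge n}]_{\fS_n}$ by the image of $(L^+)^{\wedge n-2}\wedge(L\oplus L)^+$, which is the fat diagonal.

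For the homotopy statement, however, your route diverges substantially from the paper's, and the local-to-global reduction you anticipate needing is in fact unnecessary. The paper does not filter the bar construction by partition type, and does not reduce to $M=\bR^d$. Instead it proves directly that $\gR=\Com(L^+[1])$ is \emph{flat} as an $\gS=\Com(V[2])$-module, in the sense that $\gR\otimes_\gS-$ preserves weak equivalences between well-based $\gS$-modules. The mechanism is a filtration of $\gR$ itself as a right $\gS$-module: set $F_0\gR=\gS$ and let $F_p\gR(n)$ be generated by configurations with at most $p$ ``loose'' points not already in a coincident pair. Each stage is obtained from the previous by a pushout along $(F_{p-2}\gR(p)\hookrightarrow\gR(p))[p]\otimes\gS$, a free $\gS$-module map, and the only input from the geometry of $M$ is that the inclusion of the non-distinct locus into $[(L^+)^{\wedge p}]_{\fS_p}$ is a closed cofibration between well-based spaces (proved by retracting onto a CW model). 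Once flatness is known, one simply applies $\gR\otimes_\gS-$ to the equivalence $B(\gS,\gS,S^0[0])\overset{\sim}\to S^0[0]$ and uses the strict identification.

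So the ``regularity'' you correctly isolate as the crux is established by an elementary filtration argument valid for arbitrary $M$, with no Koszulness computation, no scaling homotopies, and no gluing. Your partition filtration and local-to-global plan might be made to work, but it is a considerably longer road; the alternative you mention at the end---replacing $\Delta$ by something cofibrant---is closer in spirit to what the paper actually does, except that the paper works with module flatness rather than cofibrancy of commutative monoid maps.
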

This strictly generalises Theorem \ref{thm:MainNoLabels}, which is the case where $L$ is the 0-dimensional vector bundle, so we shall mostly focus on Theorem \ref{thm:MainC} for the rest of the paper.

Recall that the derived relative tensor product may be computed by the two-sided bar construction, formed in $\mathsf{Top}_*^\bN$, so the conclusion of Theorem \ref{thm:MainC} can equivalently be stated as an equivalence
\begin{equation}\label{eq:BarConst}
B(\Com(L^+[1]), \Com([(L \oplus L)^+]_{\fS_2}[2]), S^{0}[0]) \overset{\sim} \lra\gC(M;L).
\end{equation}
This formula has many applications to the homology of configuration spaces. As one application we will show how to recover Knudsen's \cite{Knudsen} formula for $H^*(C_n(M);\bQ)$ in terms of the compactly-supported $\bQ$-cohomology of $M$ and its cup-product map, which in particular quickly implies homological stability. As another application we will show that the action on $H^*(C_n(M);\bQ)$ of the group of proper homotopy self-equivalences of $M$ factors over a surprisingly small group. Finally, in an appendix written with Quoc P.~Ho, we show how similar considerations reproduces the work of Farb--Wolfson--Wood \cite{FWW} on homological densities.

\vspace{2ex}

\noindent \textbf{Context.} This note is my attempt to give a topological implementation of some of the sheaf-theoretic ideas of Banerjee \cite{Banerjee} in the case of configuration spaces. The applications to the homology of configuration spaces given in Section \ref{sec:Applications} arise by taking singular chains of the equivalence \eqref{eq:BarConst} to obtain a derived tensor product description of the chains on $\gC(M;L)$: this description will also follow from \cite{Banerjee2} as explained in \cite[Remark 1.1]{Banerjee}. As such, the purpose of this paper is 
\begin{enumerate}[(i)]
\item to give a space-level implementation/interpretation of Banerjee's ideas in a specific case, in order to popularise them among topologists, and 
\item to explain how several classical, recent, and new results about the rational homology of configuration spaces can be obtained very efficiently from \eqref{eq:BarConst} (or its chain-level analogue).
\end{enumerate}
Everything I will describe has much to do with the work of Ho \cite{Ho, Ho2}, Petersen \cite{Petersen},  Knudsen \cite{Knudsen}, Getzler \cite{getzler, getzler2}, Kallel \cite{Kallel}, B{\"o}digheimer--Cohen--Milgram \cite{BCM}, Segal \cite{Segal}, and Arnol'd \cite{Arnold}.

\vspace{2ex}

\noindent \textbf{Acknowledgements.}  I am grateful to Andrea Bianchi, Sadok Kallel, and the anonymous referee for their useful feedback on an earlier version of the paper. ORW was supported by the ERC under the European Union’s Horizon 2020 research and innovation programme (grant agreement No.~756444).

\section{Applications}\label{sec:Applications}

\subsection{Homology of configuration spaces}
Let $M$ be $d$-dimensional. The space $C_n(M ; L)^+$ is the 1-point compactification of the $n \cdot (d + \dim(L))$-dimensional manifold 
$$C_n(M ; L) := \left[L^n \setminus\{(l_1, \ldots, l_n) \, | \, \pi(l_i) = \pi(l_j) \text{ and } i \neq j\}\right]_{\fS_n}.$$
This is a vector bundle over $C_n(M)$, but is a manifold itself and is orientable if and only if the manifold $L$ is orientable and even-dimensional. To arrange this, we can take the vector bundle $W$ given by the orientation line of $M$ plus $(d-1)$ trivial line bundles. Thus by Poincar{\'e} duality we have
$$H^*(C_n(M) ; \bk) \cong H^*(C_n(M ; W) ; \bk) \cong \widetilde{H}_{2dn - *}(C_n(M ; W)^+ ; \bk).$$

In view of this, the bar construction description \eqref{eq:BarConst} can be used, in combination with the homology of free commutative monoids (see \cite{Milgram}), to investigate $H^*(C_n(M) ; \bk)$. We do not pursue this in general here, but rather focus on the case $\bk=\bQ$, where a complete answer is possible, and reproduces a formula of Knudsen.

\subsection{Revisiting Knudsen's formula}\label{sec:Knudsen}

For an $\bN$-graded pointed space we write $H_{n,i}(X) := \widetilde{H}_i(X(n))$, and similarly for chains. Write $S^*(V)$ for the free graded-commutative algebra on a homologically graded vector space $V$, i.e.\ $S^*(V) = \bigoplus_{n \geq 0}[V^{\otimes n}]_{\fS_n}$, where the Koszul sign rule is implemented. If $V$ is equipped with additional $\bN$-grading, then this is inherited by $S^*(V)$ (but there is no Koszul sign rule associated to the $\bN$-grading, only to the homological grading).

We consider $\gC(M;W)$. There is a map $\widetilde{C}_*(W^+;\bQ)[1] \to \widetilde{C}_*(\Com(W^+[1]);\bQ)$ and, using the Eilenberg--Zilber maps, it extends to a map of cdga's
$$S^*(\widetilde{C}_*(W^+;\bQ)[1]) \lra {C}_{*,*}(\Com(W^+[1]);\bQ),$$
which is an equivalence (since the maps $[(W^+)^{\wedge n}]_{h \fS_n} \to [(W^+)^{\wedge n}]_{\fS_n}$ are rational homology isomorphisms). Similarly, there is an equivalence of cdga's
$$S^*(\widetilde{C}_*([(W \oplus W)^+]_{\fS_2};\bQ)[2]) \lra {C}_{*,*}(\Com([(W \oplus W)^+]_{\fS_2}[2]);\bQ).$$
Furthermore, one may choose formality equivalences
\begin{align*}
\widetilde{H}_*(W^+ ; \bQ) &\lra \widetilde{C}_*(W^+;\bQ)\\
 \widetilde{H}_*([(W \oplus W)^+]_{\fS_2};\bQ) &\lra \widetilde{C}_*([(W \oplus W)^+]_{\fS_2};\bQ),
\end{align*}
i.e.\ chain maps inducing the identity on homology, and hence obtain equivalences 
\begin{align*}
S^*(\widetilde{H}_*(W^+;\bQ)[1]) &\lra S^*(\widetilde{C}_*(W^+;\bQ)[1])\\
S^*(\widetilde{H}_*([(W \oplus W)^+]_{\fS_2};\bQ)[2]) &\lra S^*(\widetilde{C}_*([(W \oplus W)^+]_{\fS_2};\bQ)[2])
\end{align*}
of cdga's. In $\bN$-grading 2, the map $\Delta$ induces a map 
$$\delta_* : \widetilde{H}_*([(W \oplus W)^+]_{\fS_2} ; \bQ) \overset{\Delta_*}\lra \widetilde{H}_*([(W^+)^{\wedge 2}]_{\fS_2} ; \bQ) \cong [\widetilde{H}_*(W^+ ; \bQ)^{\otimes 2}]_{\fS_2}.$$
With these choices the square
\begin{equation*}
\begin{tikzcd}
S^*(\widetilde{H}_*([(W \oplus W)^+]_{\fS_2} ; \bQ)[2]) \dar{S^*(\delta_*)} \rar{\sim} & {C}_{*,*}(\Com([(W \oplus W)^+]_{\fS_2}[2]);\bQ) \dar{\Delta_\#}\\
S^*(\widetilde{H}_*(W^+ ; \bQ)[1]) \rar{\sim} & {C}_{*,*}(\Com(W^+[1]);\bQ)
\end{tikzcd}
\end{equation*}
need not commute, but does commute up to homotopy in the category of cdga's because the two chain maps $\widetilde{H}_*([(W \oplus W)^+]_{\fS_2} ; \bQ) \to \widetilde{C}_*([(W^+)^{\wedge 2}]_{\fS_2} ;\bQ)$ induce the same map on homology, namely $\delta_*$, so are chain homotopic. The bar construction description then gives an identification
$$\mathrm{Tor}_*^{S^*(\widetilde{H}_*([(W \oplus W)^+]_{\fS_2} ;\bQ)[2])}(S^*(\widetilde{H}_*(W^+;\bQ)[1]), \bQ[0]) \cong H_{*,*}(\gC(M;W);\bQ).$$


Recall that for a free graded-commutative algebra $S^*(V)$ on a homologically graded vector space $V$ (perhaps equipped with a further $\bN$-grading), there is a free resolution of the trivial left $S^*(V)$-module $\bQ$ given by $\epsilon : S^*(V \oplus \Sigma V) \overset{\sim}\to \bQ$ equipped with the differential given by $\partial(\Sigma v) = v$ and extended by the Leibniz rule. It is usually called the Koszul resolution. It is indeed a resolution because it is the free graded-commutative algebra on the acyclic chain complex $\Sigma V \overset{id}\to V$, and over $\bQ$ taking homology commutes with the formation of symmetric powers. Applying this resolution to calculate the Tor groups above gives the complex
\begin{equation*}
\left(S^*\big(\widetilde{H}_*(W^+;\bQ)[1] \oplus \Sigma \widetilde{H}_{*}([(W \oplus W)^+]_{\fS_2} ;\bQ)[2]\big) , \partial \right)
\end{equation*}
with differential given by $\partial(\Sigma x) = \Delta_*(x) \in S^2(\widetilde{H}_*(W^+[1];\bQ))$ for $x \in \widetilde{H}_*([(W \oplus W)^+]/\fS_2[2];\bQ)$, and extended by the Leibniz rule. This can be simplified as follows. If $M$ is $d$-dimensional then the Thom isomorphism gives $\widetilde{H}_*(W^+;\bQ) = \Sigma^d\widetilde{H}_*(M^+ ; \bQ^{w_1})$, where $\bQ^{w_1}$ is the orientation local system of $M$. It also gives $\widetilde{H}_*((W \oplus W)^+;\bQ) = \Sigma^{2d}\widetilde{H}_*(M^+;\bQ)$. The involution swapping the two $W$ factors acts as $(-1)^d$ on the Thom class, so because the map $[(W \oplus W)^+]_{h\fS_2} \to [(W \oplus W)^+]_{\fS_2}$ is a rational equivalence we find
$$\widetilde{H}_*([(W \oplus W)^+]_{\fS_2};\bQ) = \begin{cases}
\Sigma^{2d}\widetilde{H}_*(M^+;\bQ) & d \text{ even}\\
0 & d \text{ odd}.
\end{cases}$$
This lets us write the complex as
\begin{equation}\label{eq:CECx}
\left(S^*\big( \Sigma^d\widetilde{H}_*(M^+ ; \bQ^{w_1})[1] \oplus \begin{cases}
\Sigma^{2d+1}\widetilde{H}_*(M^+;\bQ) & d \text{ even}\\
0 & d \text{ odd}
\end{cases}[2] \big) , \partial \right),
\end{equation}
where the differential is dual to the map $S^2(H^*_c(M ; \bQ^{w_1})) \to H^*_c(M ; \bQ)$ induced by cup product, so following Knudsen we can recognise this complex as the Chevelley--Eilenberg complex for the bigraded Lie algebra $H^*_c(M;\mathrm{Lie}(\Sigma^{d-1}\bQ^{w_1}[1]))$. Thus 
$$H^{2nd-*}(C_n(M);\bQ) \cong \widetilde{H}_{*}(C_n(M;W)^+;\bQ) \cong H^*_{\mathrm{Lie}}(H^*_c(M;\mathrm{Lie}(\Sigma^{d-1}\bQ^{w_1}[1])))(n).$$
After appropriate dualisations and reindexings, this agrees with Knudsen's formula.

\subsection{Homological stability}\label{sec:HomStab}

Stability for the homology of configuration spaces is by now a classical subject, with a large number of contributions by many authors: notable examples are \cite{Arnold, Segal, Church, RWConf, BenderskyMiller, CanteroPalmer,KupersMiller, Knudsen}. In particular Knudsen has explained \cite[Section 5.3]{Knudsen} how his formula implies rational (co)homological stability for the spaces $C_n(M)$. Let us briefly review this from the point of view taken here.

There is a canonical element $[M] \in \widetilde{H}_d(M^+ ; \bQ^{w_1}) \cong \widetilde{H}_{2d}(W^+;\bQ)$, and choosing a cycle representing this element provides a map 
$$\sigma : \Sigma^{2d}\bQ[1] \lra {C}_{*,*}(\Com(W^+[1]);\bQ) \lra {C}_{*,*}(\gC(M;W);\bQ).$$ Multiplication by this element defines a map
$$(\sigma \cdot -)_* : \widetilde{H}_{n-1,2d(n-1)-i}(\gC(M;W);\bQ) \lra \widetilde{H}_{n,2dn-i }(\gC(M;W);\bQ)$$
which under Poincar{\'e} duality gives a map $H^i(C_{n-1}(M);\bQ) \to H^i(C_{n}(M);\bQ)$; this can be checked to be the transfer map which sums over all ways of forgetting one of the $n$ points, see \cite[Section 5.2]{Knudsen} \cite[Section 2.6]{StavrouThesis}.

Writing ${C}_{*,*}(\gC(M;W);\bQ)/\sigma$ for the mapping cone of left multiplication by $\sigma$, the discussion above shows that its homology is calculated by a complex
\begin{equation*}
\left(S^*\big( \Sigma^d\frac{\widetilde{H}_*(M^+ ; \bQ^{w_1})}{\langle [M] \rangle}[1] \oplus \begin{cases}
\Sigma^{2d+1}\widetilde{H}_*(M^+;\bQ) & d \text{ even}\\
0 & d \text{ odd}
\end{cases}[2] \big) , \partial \right).
\end{equation*}
As $M$ is connected, if we assume that $d \geq 3$ then the bigraded vector spaces $\Sigma^d\frac{\widetilde{H}_*(M^+ ; \bQ^{w_1})}{\langle [M] \rangle}[1]$ and $\Sigma^{2d+1}\widetilde{H}_*(M^+;\bQ)[2]$ both vanish in bidegrees $(n,j)$ satisfying $j > (2d-1)n$, and hence so does the free graded-commutative algebra on them. This translates to $H^i(C_{n-1}(M);\bQ) \to H^i(C_{n}(M);\bQ)$ being surjective for $i<n$ and an isomorphism for $i < n-1$. For $d=2$ the same considerations give surjectivity for $i < \tfrac{1}{2}n$ and so on (a more careful analysis gives a slope 1 range in this case too, see \cite[Proof of Theorem 1.3]{Knudsen}). 

Analysing the complex \eqref{eq:CECx} can also establish other kinds of stability results, e.g.\ \cite{BerceanuYameen, KnudsenMillerTosteson, Yameen}.

\subsection{The action of automorphisms on unordered configurations}

Using Knudsen's formula it is possible to mislead yourself into thinking that homeomorphisms of $M$ (or indeed pointed homotopy self-equivalences of $M^+$) act on $H_*(C_n(M);\bQ)$ via their action on $H_*(M;\bQ)$: in other words, that such maps which act trivially on the homology of $M$ also act trivially on the homology of $C_n(M)$. This is not true: in the case of surfaces see Bianchi \cite[Section 7]{BianchiSplit}, Looijenga \cite{Looijenga}, and the complete analysis given by Stavrou \cite{Stavrou}.

From the point of view taken here this phenomenon can be explained as follows. For simplicity suppose that $M$ is orientable, and first suppose that it is odd-dimensional. Then $H^*(C_n(M);\bQ) \cong \widetilde{H}_{2dn-*}(C_n(M ; M \times \bR^d)^+;\bQ)$ and the analysis of Section \ref{sec:Knudsen} applied to $\gC(M ; M \times \bR^d)$ shows that $\Com(S^d \wedge M^+[1]) \to \gC(M ; M \times \bR^d)$ is a rational homology isomorphism. So we find: 

\begin{thm}
If $M$ is orientable and odd-dimensional, then a pointed homotopy self-equivalence of $M^+$ which acts trivially on $\widetilde{H}_*(M^+;\bQ)$ also acts trivially on $H^*(C_n(M);\bQ)$.\qed
\end{thm}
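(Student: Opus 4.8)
The plan is to run the argument indicated in the paragraph preceding the statement, taking care that every step is natural in the chosen self-equivalence. Fix a pointed homotopy self-equivalence $f\colon M^+\to M^+$ acting as the identity on $\widetilde H_*(M^+;\bQ)$, and put $L = M\times\bR^d$. Since $M$ is orientable, the total space of the rank-$nd$ vector bundle $C_n(M;L)\to C_n(M)$ is an orientable $2dn$-manifold, so Poincar\'e duality gives $H^*(C_n(M);\bQ)\cong\widetilde H_{2dn-*}(C_n(M;L)^+;\bQ)$, and the action of $f$ on the left is by definition the one transported along this isomorphism from the action of $f\wedge\mathrm{id}_{S^d}$ on $L^+ = M^+\wedge S^d$, hence on the commutative monoid $\gC(M;M\times\bR^d)$.

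First I would check that $\gC(M;L)$, and in fact the whole pushout square of Theorem~\ref{thm:MainC} for this particular $L$, is functorial for the self-map $f\wedge\mathrm{id}$ of $L^+$: the maps $f\wedge\mathrm{id}_{S^d}$ on $L^+$ and $f\wedge\mathrm{id}_{S^{2d}}$ on $(L\oplus L)^+ = M^+\wedge S^{2d}$ both cover $f$ on $M^+$, so they preserve the loci where two labels have equal projection to $M$, they are $\fS_2$-equivariant where this is needed, and they are compatible with superposition and with $\Delta$ and $\epsilon$. Consequently the induced endomorphism $\gC(f)$ of $\gC(M;L)$ is identified, under the homotopy pushout of Theorem~\ref{thm:MainC} — equivalently under the bar construction \eqref{eq:BarConst} — with the endomorphisms $\Com(f\wedge\mathrm{id})$ of $\Com(L^+[1])$ and $\Com([(L\oplus L)^+]_{\fS_2}[2])$.

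Then I would use that $d$ is odd: the swap of the two $\bR^d$-summands acts by $-1$ on $\widetilde H_*((L\oplus L)^+;\bQ) = \Sigma^{2d}\widetilde H_*(M^+;\bQ)$, so exactly as in Section~\ref{sec:Knudsen} one gets $\widetilde H_*([(L\oplus L)^+]_{\fS_2};\bQ) = 0$ and hence $\Com([(L\oplus L)^+]_{\fS_2}[2])\simeq_\bQ S^0[0]$. Feeding this into the natural bar construction shows that $\Com(S^d\wedge M^+[1]) = \Com(L^+[1])\to\gC(M;L)$ is a rational homology isomorphism, naturally in $f$. Since $H_{*,*}(\Com(L^+[1]);\bQ) = S^*(\Sigma^d\widetilde H_*(M^+;\bQ)[1])$ functorially, and $f\wedge\mathrm{id}_{S^d}$ induces $\Sigma^d f_* = \mathrm{id}$ on $\widetilde H_*(L^+;\bQ)$, the induced endomorphism of $S^*(\Sigma^d\widetilde H_*(M^+;\bQ)[1])$ is the identity; transporting along the natural rational equivalence, $\gC(f)_*$ acts as the identity on $H_{*,*}(\gC(M;L);\bQ)$, hence on $\widetilde H_{2dn-*}(C_n(M;L)^+;\bQ)\cong H^*(C_n(M);\bQ)$, which is the claim.

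The genuinely necessary (if not difficult) points are the naturality bookkeeping in the second and third steps and the identification in the first step: that the geometric-looking action of $f$ on $H^*(C_n(M);\bQ)$ really is the homotopy-theoretic action on $\widetilde H_*(\gC(M;M\times\bR^d);\bQ)$ read through Poincar\'e duality. Without these, one has only learned that $H^*(C_n(M);\bQ)$ is abstractly a functor of $\widetilde H_*(M^+;\bQ)$, which — as the surface computations recalled above show — does not by itself force equivariance. I expect no real obstacle beyond this, the substance of the argument being contained in the preceding discussion and in Theorem~\ref{thm:MainC}.
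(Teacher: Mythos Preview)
Your proposal is correct and follows essentially the same route as the paper: take $L=M\times\bR^d$ (which is the paper's $W$ since $M$ is orientable), use Poincar\'e duality to pass to $\widetilde H_*(\gC(M;L);\bQ)$, observe that for $d$ odd the swap involution kills $\widetilde H_*([(L\oplus L)^+]_{\fS_2};\bQ)$ so that $\Com(S^d\wedge M^+[1])\to\gC(M;L)$ is a rational homology isomorphism, and conclude because the left side depends functorially only on $\widetilde H_*(M^+;\bQ)$. Your explicit attention to naturality in $f$ is exactly what the paper's terse ``\emph{So we find:}'' followed by \qed\ is suppressing.
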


The even-dimensional case is more interesting. As $M$ is assumed orientable, in this case the twisting by $W$ can be dispensed with. It is technically convenient here---for reasons of symmetric monoidality---to work in the category of simplicial $\bQ$-modules rather than chain complexes. We write $-\odot-$ for the tensoring of this category over simplicial sets. For a space $X$ let us abbreviate $\bQ[X] := \bQ[\mathrm{Sing}_\bullet(X)]$, and if it is based then let $\widetilde{\bQ}[X] = \bQ[X]/\bQ[*]$. The discussion in the previous section, ignoring the formality step and translated to simplicial $\bQ$-modules, shows that given the simplicial module $\widetilde{\bQ}[M^+]$ and the map $\delta : \widetilde{\bQ}[M^+] \to \big[ \widetilde{\bQ}[M^+]^{\otimes 2} \big]_{\fS_2}$ induced by the diagonal $M^+ \to M^+ \wedge M^+$, we may form the two-sided bar construction
\begin{equation}\label{eq:TwoSidedBar}
B(S^*(\widetilde{\bQ}[M^+][1]) , S^*(\widetilde{\bQ}[M^+][2]), \bQ[0])
\end{equation}
whose bigraded homotopy groups are identified with $\widetilde{H}_*(C_*(M)^+;\bQ)$.

A homeomorphism of $M$, or a pointed homotopy self-equivalence of $M^+$, induces an equivalence $\phi : \widetilde{\bQ}[M^+] \to \widetilde{\bQ}[M^+]$ such that $\delta \circ \phi = ([\phi \otimes \phi]_{\fS_2}) \circ \delta$, meaning that the diagram of simplicial commutative rings
\begin{equation*}
\begin{tikzcd}
S^*(\widetilde{\bQ}[M^+][1]) \dar{S^*(\phi)} & S^*(\widetilde{\bQ}[M^+][2]) \lar[swap]{S^*(\delta)} \rar{\epsilon} \dar{S^*(\phi)} & \bQ[0] \dar{id}\\
S^*(\widetilde{\bQ}[M^+][1]) & S^*(\widetilde{\bQ}[M^+][2]) \lar[swap]{S^*(\delta)}  \rar{\epsilon}  & \bQ[0]
\end{tikzcd}
\end{equation*}
is commutative, which induces a self-equivalence of the two-sided bar construction \eqref{eq:TwoSidedBar}. This corresponds to the induced action on $\widetilde{H}_*(C_*(M)^+;\bQ)$.

However a weaker kind of data suffices to get an induced equivalence on two-sided bar constructions. An equivalence $\phi : \widetilde{\bQ}[M^+] \to \widetilde{\bQ}[M^+]$ together with a homotopy $h : \delta \circ \phi \Rightarrow ([\phi \otimes \phi]_{\fS_2}) \circ \delta$ gives a diagram of simplicial commutative rings as above where the right-hand square commutes and the left-hand squares commutes up to the homotopy $S^*(h) : S^*(\Delta^1 \odot \widetilde{\bQ}[M^+][2]) \to S^*(\widetilde{\bQ}[M^+][1])$. This data suffices to obtain a self-equivalence $\chi(\phi, h)$ of the two-sided bar construction \eqref{eq:TwoSidedBar}, as the zig-zag
\begin{equation*}
\tikzcdset{scale cd/.style={every label/.append style={scale=#1},
    cells={nodes={scale=#1}}}}
\begin{tikzcd}[scale cd=0.9]
 B(S^*(\widetilde{\bQ}[M^+][1]) , S^*(\Delta^1 \odot \widetilde{\bQ}[M^+][2]), \bQ[0])   & B(S^*(\widetilde{\bQ}[M^+][1])' , S^*(\widetilde{\bQ}[M^+][2]), \bQ[0]) \arrow[l, "{B(id, S^*(d_0), id)}" {yshift=-5pt}] \arrow[d, "{B(id, S^*(\phi), id)}"]\\
 B(S^*(\widetilde{\bQ}[M^+][1]) , S^*(\widetilde{\bQ}[M^+][2]), \bQ[0]) \arrow[u, "{B(S^*(\phi), S^*(d_1), id)}"] \arrow[r, dashed, "{\chi(\phi, h)}"]& B(S^*(\widetilde{\bQ}[M^+][1]) , S^*(\widetilde{\bQ}[M^+][2]), \bQ[0]) 
\end{tikzcd}
\end{equation*}
where $S^*(\widetilde{\bQ}[M^+][1])'$ denotes $S^*(\widetilde{\bQ}[M^+][1])$ considered as a $S^*(\widetilde{\bQ}[M^+][2])$-module via $S^*(\phi) \circ S^*(\delta)$. 

Let $(\phi', h')$ be another such datum, and suppose that there is a homotopy $\Phi : \phi \Rightarrow \phi'$ such that the 2-cell
\begin{equation}\label{eq:2cell}
\begin{tikzcd}
{[\widetilde{\bQ}[M^+]^{\otimes 2}]_{\fS_2}}  \arrow[ddd, swap, "{[(\phi')^{\otimes 2}]_{\fS_2}}", ""{name=D1}] & & &\widetilde{\bQ}[M^+] \arrow[ddd, "\phi'", ""{name=D4}] \arrow[lll, swap, "\delta",""{name=D6}]\\
& {[\widetilde{\bQ}[M^+]^{\otimes 2}]_{\fS_2}} \arrow[lu, equals] \arrow[d, swap, "{[\phi^{\otimes 2}]_{\fS_2}}",""{name=D2}] &\widetilde{\bQ}[M^+] \arrow[l, swap, "\delta", ""{name=D5}] \arrow[d, "\phi", ""{name=D3}] \arrow[ru, equals] \\
& {[\widetilde{\bQ}[M^+]^{\otimes 2}]_{\fS_2}} \arrow[ld, equals]  &\widetilde{\bQ}[M^+] \arrow[l, "\delta", ""{name=D7}] \arrow[rd, equals]\\
{[\widetilde{\bQ}[M^+]^{\otimes 2}]_{\fS_2}} & & &\widetilde{\bQ}[M^+] \arrow[lll, "\delta", ""{name=D8}]
\arrow[phantom,from=D1,to=D2,"{[\Phi^{\otimes 2}]_{\fS_2} \quad}"]
\arrow[phantom,from=D3,to=D4,"{\Phi}"]
\arrow[phantom,from=D2,to=D3,"{h}"]
\arrow[phantom,from=D5,to=D6,"{Id}"]
\arrow[phantom,from=D7,to=D8,"{Id}"]
\end{tikzcd}
\end{equation}
is homotopic to $h'$. Then one may check that $\chi(\phi', h')$ is homotopic to $\chi(\phi, h)$. If we let $\Gamma$ denote the set of $(\phi, h)$'s modulo the equivalence relation $(\phi, h) \sim (\phi', h')$ when there exists a homotopy $\Phi$ having the above property, then composition of maps and pasting of homotopies makes $\Gamma$ into a group, which acts on the two-sided bar construction \eqref{eq:TwoSidedBar} in the homotopy category of simplicial $\bQ$-modules (and so also acts on its homotopy groups). A pointed homotopy self-equivalence of $M^+$ acts on the two-sided bar construction through $\Gamma$, via elements of the special form $[(\phi, Id)]$.

We may analyse the group $\Gamma$ as follows. There is a homomorphism
\begin{align*}
\rho: \Gamma &\lra \mathrm{Aut}(\widetilde{\bQ}[M^+])\\
[(\phi, h)] &\longmapsto [\phi]
\end{align*}
to the group of homotopy classes of homotopy self-equivalences of $\widetilde{\bQ}[M^+]$. Using the Dold--Kan theorem the latter can be identified with the group of homotopy classes of homotopy self-equivalences of $\widetilde{C}_*(M^+;\bQ)$, and using a formality equivalence $\widetilde{H}_*(M^+;\bQ) \overset{\sim}\to \widetilde{C}_*(M^+;\bQ)$ this is identified with the group $\mathrm{Aut}(\widetilde{H}_*(M^+;\bQ))$ of automorphisms of the graded vector space $\widetilde{H}_*(M^+;\bQ)$. Such an automorphism is in the image of $\rho$ precisely when it preserves the map $\delta_* : \widetilde{H}_*(M^+;\bQ) \to [\widetilde{H}_*(M^+;\bQ)^{\otimes 2}]_{\fS_2}$. The kernel of $\rho$ consists of those $[(\phi, h)]$ such that $\phi$ is homotopic to the identity: by definition of the equivalence relation $\sim$ such an element may be written as $[(Id, h')]$ where $h'$ is obtained from $h$ and a homotopy $\Phi : \phi \Rightarrow id$ by the 2-cell diagram \eqref{eq:2cell}. Such an $h'$ is a self-homotopy of the map $\delta$, so an element of $\pi_1(\mathrm{map}(\widetilde{\bQ}[M^+], \big[\widetilde{\bQ}[M^+]^{\otimes 2}\big]_{\fS_2})) ; \delta)$. The ambiguity in $h'$ when representing  $[(\phi, h)] \in \mathrm{Ker}(\rho)$ as $[(Id, h')]$ comes from the choice of the homotopy $\Phi$, so $h'$ is well-defined modulo the ambiguity coming from the self-homotopies $\pi_1(\mathrm{map}(\widetilde{\bQ}[M^+], \widetilde{\bQ}[M^+]); id)$ of the identity map. In conclusion, this discussion establishes an exact sequence 
\begin{equation*}
\begin{tikzcd}
 \pi_1(\mathrm{map}(\widetilde{\bQ}[M^+], \widetilde{\bQ}[M^+]); id) \rar{\delta \circ -}
             \ar[draw=none]{d}[name=X, anchor=center]{}
    & \pi_1(\mathrm{map}(\widetilde{\bQ}[M^+], \big[\widetilde{\bQ}[M^+]^{\otimes 2}\big]_{\fS_2});\delta) \ar[rounded corners,
            to path={ -- ([xshift=2ex]\tikztostart.east)
                      |- (X.center) \tikztonodes
                      -| ([xshift=-2ex]\tikztotarget.west)
                      -- (\tikztotarget)}]{dl}[at end]{} \\      
   \Gamma \rar{\rho} & \mathrm{Aut}(\widetilde{H}_*(M^+;\bQ), \delta_*) \rar & 1.
\end{tikzcd}
\end{equation*}
Using the Dold--Kan theorem and a formality equivalence again we can identify the first map in this sequence with
$$\delta_* \circ - : \mathrm{Hom}(\Sigma \widetilde{H}_*(M^+;\bQ), \widetilde{H}_*(M^+;\bQ)) \lra \mathrm{Hom}(\Sigma \widetilde{H}_*(M^+;\bQ), [\widetilde{H}_*(M^+;\bQ)^{\otimes 2}]_{\fS_2}),$$
and so describe $\Gamma$ by an extension
$$1 \to \mathrm{Hom}(\Sigma \widetilde{H}_*(M^+;\bQ), S^2(\widetilde{H}_*(M^+;\bQ))/\mathrm{Im}(\delta_*)) \to \Gamma \to \mathrm{Aut}(\widetilde{H}_*(M^+;\bQ), \delta_*) \to 1.$$

This implies the following. We continue to assume that $M$ is even-dimensional and orientable. Let $G$ denote the group of homotopy classes of pointed homotopy self-equivalences of $M^+$ which act as the identity on $\widetilde{H}_*(M^+;\bQ)$.

\begin{thm}\label{thm:action}
If $M$ is orientable and even-dimensional, then $G$ acts on ${H}^*(C_n(M);\bQ)$ via 
$\mathrm{Hom}(\Sigma \widetilde{H}_*(M^+;\bQ), S^2(\widetilde{H}_*(M^+;\bQ))/\mathrm{Im}(\delta_*))$.\qed
\end{thm}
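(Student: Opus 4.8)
The plan is to read the conclusion off from the constructions made just before the statement; most of the work has already been done, and what remains is a short diagram chase together with one naturality check.

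First I would fix the identification through which $G$ acts. Since $M$ is orientable and even-dimensional we may take $W=\bR^d$, and then---as in Section~\ref{sec:Knudsen}, but omitting the formality step and working with simplicial $\bQ$-modules---Thom and Poincaré–Lefschetz duality identify $H^*(C_n(M);\bQ)$ with $\widetilde{H}_{2dn-*}(C_n(M)^+;\bQ)$, and the latter, as a functor of $n$, is computed by the homotopy groups of the two-sided bar construction
$$B := B\big(S^*(\widetilde{\bQ}[M^+][1]),\, S^*(\widetilde{\bQ}[M^+][2]),\, \bQ[0]\big)$$
built from the data $\mathcal{Q}=(\widetilde{\bQ}[M^+],\delta)$ as in the discussion preceding the statement (the $\bQ$-linearised form of \eqref{eq:BarConst}). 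A pointed homotopy self-equivalence $f$ of $M^+$ induces an automorphism $\widetilde{\bQ}[f]$ of $\widetilde{\bQ}[M^+]$ that strictly commutes with $\delta$ (since $f\wedge f$ commutes with the diagonal), so together with the constant homotopy it defines a $0$-simplex of $\mathrm{Aut}(\mathcal{Q})$; this is how $G$---and, more generally, $\pi_0$ of the simplicial monoid of pointed homotopy self-equivalences of $M^+$---acts on $\pi_*(B)\cong H^*(C_n(M);\bQ)$, and when $f$ is induced by a homeomorphism of $M$ this recovers the usual geometric action. I expect verifying this last compatibility---that the bar-construction action, transported through the Thom and duality isomorphisms, is genuinely \emph{the} action of $G$ on $H^*(C_n(M);\bQ)$---to be the only point that genuinely requires care; granting the preceding discussion, everything else is formal.

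Next I would note that, because $B$ is a functor of the pair $\mathcal{Q}$ and is a simplicial object, the action of the group-like simplicial monoid $\mathrm{Aut}(\mathcal{Q})$ on $B$ descends to an action of $\pi_0\mathrm{Aut}(\mathcal{Q})$ on $\pi_*(B)$. Hence the action of $G$ on $H^*(C_n(M);\bQ)$ factors through the group homomorphism $G\to\pi_0\mathrm{Aut}(\mathcal{Q})$ induced by the map of simplicial monoids above.

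Finally I would identify the image of this homomorphism. Composing with the forgetful map $\pi_0\mathrm{Aut}(\mathcal{Q})\to\pi_0\mathrm{Aut}(\widetilde{\bQ}[M^+])$ and using that over $\bQ$ every module is formal, so that $\pi_0\mathrm{Aut}(\widetilde{\bQ}[M^+])=\mathrm{Aut}(\widetilde{H}_*(M^+;\bQ))$ with the class of $f$ sent to $f_*$, we see that an $f\in G$---acting by definition as the identity on $\widetilde{H}_*(M^+;\bQ)$---maps to the identity in $\mathrm{Aut}(\widetilde{H}_*(M^+;\bQ),\Delta_*)$. Therefore its class in $\pi_0\mathrm{Aut}(\mathcal{Q})$ lies in the kernel of $\pi_0\mathrm{Aut}(\mathcal{Q})\to\mathrm{Aut}(\widetilde{H}_*(M^+;\bQ),\Delta_*)$, which by the extension established just before the statement is $\mathrm{Hom}(\Sigma\widetilde{H}_*(M^+;\bQ),S^2(\widetilde{H}_*(M^+;\bQ))/\Delta_*)$. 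Combined with the previous paragraph, this shows that the $G$-action on $H^*(C_n(M);\bQ)$ factors through the stated group, completing the proof.
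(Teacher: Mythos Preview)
Your proposal is correct and follows exactly the approach of the paper: the theorem is marked \qed because it is meant to be read off directly from the extension $1 \to \mathrm{Hom}(\Sigma\widetilde{H}_*(M^+;\bQ),S^2(\widetilde{H}_*(M^+;\bQ))/\Delta_*) \to \pi_0\mathrm{Aut}(\mathcal{Q}) \to \mathrm{Aut}(\widetilde{H}_*(M^+;\bQ),\Delta_*) \to 1$ established just before it, together with the observation that $G$ lands in the kernel of the right-hand map. Your write-up makes explicit the two small steps (that the action on $\pi_*(B)$ factors through $\pi_0\mathrm{Aut}(\mathcal{Q})$, and that elements of $G$ map to the identity on the right) which the paper leaves implicit.
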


\begin{example}
When $M$ is a punctured surface one has $\widetilde{H}_*(M^+;\bQ) = \Sigma H_1(M;\bQ) \oplus \Sigma^2\bQ$ so the map $\delta_* : \widetilde{H}_*(M^+;\bQ) \to S^2(\widetilde{H}_*(M^+;\bQ))$ has the form
$$\Sigma H_1(M;\bQ) \oplus \Sigma^2\bQ \lra \Sigma^2 \Lambda^2(H_1(M;\bQ)) \oplus \Sigma^3 H_1(M;\bQ) \oplus \Sigma^4 \bQ,$$
which in grading 2 is the inclusion of the symplectic form $\omega \in \Lambda^2(H_1(M;\bQ))$ and is zero otherwise. Thus the above is $\mathrm{Hom}(H_1(M;\bQ), \Lambda^2(H_1(M;\bQ))/\langle \omega \rangle) \oplus H_1(M;\bQ)$. Using Poincar{\'e} duality and $\Lambda^2(H_1(M;\bQ)) \cong \bQ\{\omega\} \oplus \Lambda^2(H_1(M;\bQ))/\langle \omega \rangle$, this can be identified with $\mathrm{Hom}(H_1(M;\bQ), \Lambda^2(H_1(M;\bQ)))$. This is the target of the Johnson homomorphism, cf.\ \cite{Stavrou}.
\end{example}

\begin{rem}
The results of this section should also follow from \cite[Theorem 1.2]{Stavrou} and some rational homotopy theory.
\end{rem}

\section{Proof of Theorem \ref{thm:MainC}}

Recall that $X \in \mathsf{Top}_*$ is \emph{well-based} if the basepoint map $i : * \to X$ is a closed cofibration: under this condition $X \wedge -$ preserves weak equivalences between well-based spaces, and preserves closed cofibrations. Let us say that an $\bN$-graded based space $Y$ is well-based if $Y(n)$ is well-based for each $n \in \bN$. 

Let us write $\gR := \Com(L^+[1])$ and $\gS := \Com([(L \oplus L)^+]_{\fS_2}[2])$ to ease notation, so $\Delta : \gS \to \gR$ makes $\gR$ into a $\gS$-module. 

\begin{lem}\label{lem:cofibrancy}
$\gS$ and $\gR$ are well-based. The subspace of $[(L^+)^{\wedge p}]_{\fS_p}$ of those tuples which do not have distinct $M$ coordinates is well-based, and this inclusion is a closed cofibration.
\end{lem}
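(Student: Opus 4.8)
The plan is to deduce all three assertions from a single geometric input: that certain compact manifolds with corners, together with their topological boundaries and with the pre-images of fat diagonals under bundle projections, admit triangulations which are compatible and $G$-equivariant for the relevant finite groups $G$. Granting such triangulations, every space appearing in the statement is of the form $K/L$ for a finite simplicial complex $K$ and subcomplex $L$, hence a pointed CW complex, and every inclusion in the statement becomes an inclusion of subcomplexes; since a subcomplex inclusion is a closed cofibration and a pushout of a closed cofibration is a closed cofibration, the cofibrancy of $Z_p \hookrightarrow [(L^+)^{\wedge p}]_{\fS_p}$ is then immediate, and the ``well-based'' claims reduce to the observation that $* \to K/L$ is the pushout of $L \hookrightarrow K$ along $L \to *$. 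For the set-up: since $M = \mathrm{int}(\bar M)$ for a compact manifold $\bar M$ and every vector bundle on $M$ is the restriction of one on $\bar M$, I would take $L = \bar L|_M$, fix a metric, and let $D := D(\bar L)$ be the closed unit disk bundle---a compact manifold with corners whose topological boundary $\partial D$ (the sphere bundle, together with the part over $\partial\bar M$) is a closed subset with $L^+ \cong D/\partial D$, compatibly with the projection $\mathrm{proj} : D \to \bar M$. I write $\mathrm{SP}^m(-)$ for the $m$-th symmetric smash power $[(-)^{\wedge m}]_{\fS_m}$.

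To see that $\gR$ and $\gS$ are well-based, note that $\gR(m) = \mathrm{SP}^m(L^+)$ and $\gS(2m) = \mathrm{SP}^m([(L \oplus L)^+]_{\fS_2})$, the remaining gradings being $S^0$ or a point. Setting $E := D \times_{\bar M} D$, a compact manifold with corners carrying the $\fS_2$-action that swaps the two factors and with $(L \oplus L)^+ \cong E/\partial E$ equivariantly, one has
$$\mathrm{SP}^m(L^+) = D^m/(\partial_{\mathrm{fat}} D^m, \fS_m), \qquad \mathrm{SP}^m([(L \oplus L)^+]_{\fS_2}) = E^m/(\partial_{\mathrm{fat}} E^m, \fS_2 \wr \fS_m),$$
where $\partial_{\mathrm{fat}}$ denotes the union of the loci in which some coordinate lies in the boundary, and the displayed quotients mean ``collapse that locus, then take the finite-group quotient''. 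Choosing compatible equivariant triangulations of $D$ and $E$ with their boundaries as subcomplexes and $\mathrm{proj}$ simplicial, and passing to equivariant subdivisions of the product triangulations of $D^m$ and $E^m$ in which $\partial_{\mathrm{fat}}$ is a subcomplex, realises these symmetric powers as pointed CW complexes; hence $\gR$ and $\gS$ are well-based.

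For the last assertion I would first identify, inside $\mathrm{SP}^p(L^+)$, the subspace $Z_p$ of tuples with no two distinct $M$-coordinates, and observe that under the composite $D^p \to D^p/\partial_{\mathrm{fat}} D^p = (L^+)^{\wedge p} \to \mathrm{SP}^p(L^+)$ its pre-image is the closed, $\fS_p$-invariant subset
$$\widetilde{Z}_p \;=\; \partial_{\mathrm{fat}} D^p \ \cup\ \bigcup_{1 \le i < j \le p}\{(d_1,\dots,d_p) : \mathrm{proj}(d_i) = \mathrm{proj}(d_j)\} \;=\; \partial_{\mathrm{fat}} D^p \ \cup\ (\mathrm{proj}^{\times p})^{-1}(\Delta^{\mathrm{fat}}_{\bar M}) \ \subseteq\ D^p,$$
so that $Z_p = (\widetilde{Z}_p/\fS_p)/(\partial_{\mathrm{fat}} D^p/\fS_p)$ sits inside $\mathrm{SP}^p(L^+) = (D^p/\fS_p)/(\partial_{\mathrm{fat}} D^p/\fS_p)$. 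I would then choose an $\fS_p$-equivariant triangulation of $D^p$ in which $\partial_{\mathrm{fat}} D^p$ and $(\mathrm{proj}^{\times p})^{-1}(\Delta^{\mathrm{fat}}_{\bar M})$---hence also $\widetilde{Z}_p$---are subcomplexes. With such a choice $D^p/\fS_p$ is a finite simplicial complex containing $\widetilde{Z}_p/\fS_p$ and $\partial_{\mathrm{fat}} D^p/\fS_p$ as subcomplexes, so after collapsing the latter both $\mathrm{SP}^p(L^+)$ and $Z_p$ become pointed CW complexes and $Z_p \hookrightarrow \mathrm{SP}^p(L^+)$, as well as $* \hookrightarrow Z_p$, are inclusions of subcomplexes---which is exactly the claim.

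The one genuine obstacle I anticipate is the existence of these compatible equivariant triangulations, and in particular the demand that $(\mathrm{proj}^{\times p})^{-1}(\Delta^{\mathrm{fat}}_{\bar M})$ be a subcomplex: it is a union of smooth submanifolds with corners that are not in general position with one another (the coincidence loci $\{\mathrm{proj}(d_i) = \mathrm{proj}(d_j)\}$ meet along the smaller diagonals), so it is not a subcomplex of any product triangulation. One must therefore invoke triangulability of compact subanalytic sets equipped with a subanalytic finite-group action, which applies once the smooth data is replaced by real-analytic data, so that $D$, $\mathrm{proj}$, $\partial D$, the diagonals in $\bar M^p$, and the $\fS_p$-action are all (sub)analytic. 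A triangulation-free alternative is to build an $\fS_p$-equivariant NDR structure on the pair $(D^p, \widetilde{Z}_p)$ by hand---combining a collar near $\partial_{\mathrm{fat}} D^p$ with tubular-neighbourhood retractions onto the diagonal strata indexed by the partition lattice of $\{1,\dots,p\}$, glued via the union theorem for cofibrations---and then descend along $D^p \to \mathrm{SP}^p(L^+)$; the bookkeeping is comparable, and everything downstream is formal either way.
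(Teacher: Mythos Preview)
Your approach is correct, but it differs from the paper's in one essential respect: the paper avoids triangulating the manifold~$D^p$ (with its fat-diagonal strata) altogether. Instead, it observes that the compact pair $(\overline{M},\partial\overline{M})$ is an ENR pair and hence a retract of a pair $(|X_\bullet|,|\partial X_\bullet|)$ of realisations of simplicial sets; the vector bundle is pulled back to $|X_\bullet|$. The point of this manoeuvre is that in the simplicial world the fat diagonal of $|X_\bullet|^p$ is \emph{automatically} a $\fS_p$-subcomplex (it is the realisation of the simplicial fat diagonal), so an equivariant NDR neighbourhood exists for free, can be lifted through the disc bundle, and then be pushed down to the various quotients. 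The original objects $[(L^+)^{\wedge p}]_{\fS_p}$ and their fat-diagonal subspaces are recovered as retracts of the corresponding objects built over $|X_\bullet|$, and both ``well-based'' and ``closed cofibration'' are retract-stable properties.

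What each approach buys: yours gives an honest CW structure on $[(L^+)^{\wedge p}]_{\fS_p}$ with $Z_p$ as a subcomplex, which is conceptually clean but, as you note, forces you to invoke real-analytic structures and equivariant subanalytic triangulation (or Illman-type results) to handle the non-transverse incidence strata. The paper's retract-of-a-simplicial-complex trick is more elementary---it needs nothing beyond ``ENR pairs are retracts of simplicial pairs'' and basic facts about cofibrations---at the cost of only producing the cofibration properties, not a genuine cell structure on the original spaces. For the purposes of Lemma~\ref{lem:cof} and the rest of the paper, that is all that is required.
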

\begin{proof}
Recall that $M$ is the interior of a compact manifold with boundary $\overline{M}$. This admits a collar, showing that $i : M \to \overline{M}$ admits a homotopy inverse, and so the vector bundle $L \to M$ extends to a vector bundle over $\overline{M}$, which we also call $L$. Furthermore, choosing an inner product on this bundle we can form the closed disc bundle $D(L) \to \overline{M}$, and consider $L$ as lying inside it as the open disc bundle. Now $D(L)$ is a manifold with boundary $\partial D(L) = S(L) \cup D(L)\vert_{\partial \overline{M}}$, and $L^+ = D(L)/\partial D(L)$.

Observe that $(\overline{M}, \partial \overline{M})$ is an compact manifold pair so (is an ENR pair and hence) can be expressed as a retract of a pair $(|X_\bullet|, |\partial X_\bullet|)$ of the geometric realisations of a simplicial set and a subset. We may pull $L$ back to $|X_\bullet|$ using the retraction; let us call this $L_X$. Now $D(L_X)/S(L_X) \cup D(L_X)\vert_{|\partial X_\bullet|}$ can be given an evident cell-structure (by induction over the relative cells of $|\partial X_\bullet| \to |X_\bullet|$), and $L^+ = D(L)/\partial D(L)$ is a retract of it, so is well-based. More generally, for the exterior direct sum $L_X^{\boxplus p} \to |X_\bullet^p|$ and writing $\partial |X_\bullet^p|$ for the subcomplex where some factor lies in $\partial X_\bullet$, there is a cell structure on $D(L_X^{\boxplus p})/S(L_X^{\boxplus p}) \cup D(L_X^{\boxplus p})\vert_{\partial |X_\bullet^p|}$ for which the group $\fS_p$ acts cellularly, and so $[D(L_X^{\boxplus p})/S(L_X^{\boxplus p}) \cup D(L_X^{\boxplus p})\vert_{\partial |X_\bullet^p|}]_{\fS_p}$ is a cell complex of which $[(L^+)^{\wedge p}]_{\fS_p}$ is a retract, and so is well-based. This shows that $\gR$ is well-based, and similar reasoning shows $\gS$ is.

For the second statement, 
$$inc: F := \text{fat diagonal of } |X_\bullet|^p = |\text{fat diagonal of }X_\bullet^{p}| \lra |X_\bullet^{p}| = |X_\bullet|^p$$
is the inclusion of a $\fS_p$-CW-subcomplex, and so has a $\fS_p$-equivariant open neighbourhood $U$ which equivariantly deformation retracts to it. This may be chosen to preserve the subcomplexes where some factor lies in $|\partial X_\bullet|$. Thus it lifts to a $\fS_p$-equivariant deformation retraction of an open neighbourhood of $L_X^{\boxplus p}\vert_F \to L_X^{\boxplus p}$, and descends to the quotient by the subcomplexes where some factor lies in $|\partial X_\bullet|$. As it is equivariant, it descends further to the $\fS_p$-quotient. That is, it proves the claim for $(\overline{M}, \partial \overline{M}, L)$ replaced by $(|X_\bullet|, |\partial X_\bullet|, L_X)$; as the former data is a retract of the latter, the claim follows.
\end{proof}

\begin{lem}\label{lem:cof}
$\gR$ is a flat $\gS$-module, in the sense that $\gR \otimes_\gS -$ preserves weak equivalences between left $\gS$-modules whose underlying objects are well-based.
\end{lem}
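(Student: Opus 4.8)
The plan is to reduce flatness of $\gR$ as an $\gS$-module to flatness of each graded piece, using that $\gS = \Com([(L\oplus L)^+]_{\fS_2}[2])$ is a free commutative monoid and $\gR = \Com(L^+[1])$ is also free, so that the relative tensor product $\gR \otimes_\gS -$ can be analysed via the two-sided bar construction $B(\gR, \gS, -)$. First I would recall that for a well-based $\gS$-module $\gN$, the simplicial object $[p] \mapsto \gR \otimes (\gS)^{\otimes p} \otimes \gN$ computing $B(\gR,\gS,\gN)$ is degreewise a smash product of well-based spaces (by Lemma~\ref{lem:cofibrancy}, $\gR$ and $\gS$ are well-based, and $\gN$ is well-based by hypothesis), hence each $\gR \otimes (\gS)^{\otimes p} \otimes -$ preserves weak equivalences between well-based modules because smashing with a well-based space does. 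Then I would argue that the bar construction, being a realisation of a \emph{proper} (Reedy-cofibrant) simplicial object in each fixed $\bN$-grading, takes degreewise weak equivalences to weak equivalences; the properness comes from the fact that the degeneracy maps are built from the closed-cofibration basepoint inclusions $S^0[0] \to \gS$, which are cofibrations by well-basedness. This shows $B(\gR, \gS, -)$ preserves weak equivalences between well-based $\gS$-modules, and since $B(\gR,\gS,\gN) \simeq \gR \otimes_\gS^\bL \gN$ and for a flat-enough situation $\simeq \gR \otimes_\gS \gN$, one concludes.

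**However**, the cleaner route, and the one I would actually pursue, is to use the explicit combinatorial description of the tensor product. Since both $\gS$ and $\gR$ are free commutative monoids, $\gR$ is free as an $\gS$-\emph{module} only after a cofibrant replacement, so instead I would exploit that in each $\bN$-grading $n$, the object $(\gR \otimes_\gS \gN)(n)$ is a colimit over a finite poset (of ways to split $n$ into a part coming from $\gR$, a part "cancelled" against $\gS$, and a part coming from $\gN$) of smash products of well-based spaces, and that this colimit is in fact a finite iterated pushout along closed cofibrations — here the second statement of Lemma~\ref{lem:cofibrancy}, that the inclusion of the non-distinct locus into $[(L^+)^{\wedge p}]_{\fS_p}$ is a closed cofibration, is exactly what guarantees that the relevant attaching maps are cofibrations. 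A weak equivalence $\gN \to \gN'$ of well-based $\gS$-modules then induces, grading by grading, a map of such finite pushout diagrams which is a weak equivalence on each vertex (smashing a well-based space with a weak equivalence of well-based spaces is a weak equivalence) and a cofibration on each structure map, hence a weak equivalence on the (homotopy) colimit by the gluing lemma.

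**The main obstacle** I anticipate is bookkeeping the $\fS_p$-equivariance carefully: the free commutative monoid functor $\Com$ involves quotients by symmetric groups, and tensoring over $\gS$ introduces further quotients, so verifying that the relevant subspaces are genuinely closed cofibrations \emph{after} passing to $\fS_p$-orbits (rather than merely $\fS_p$-equivariantly before) requires the equivariant deformation-retract argument already set up at the end of the proof of Lemma~\ref{lem:cofibrancy}. Concretely, I would need that for each way of grouping configuration points into "$\gR$-points", "$\gS$-pairs", and "$\gN$-points", the locus where two points from different (or the same) groups have coinciding $M$-coordinate is a closed cofibration in the orbit space; this follows by the same pull-back-to-a-simplicial-set-and-retract trick, since the fat diagonal of a product of realisations is a subcomplex stable under the relevant symmetric group. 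Once this cofibrancy input is in hand, the remaining steps — identifying $(\gR \otimes_\gS \gN)(n)$ explicitly and applying the gluing lemma — are formal, so I would keep the write-up short and lean on Lemma~\ref{lem:cofibrancy} as much as possible.
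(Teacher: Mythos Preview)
Your first approach does not prove the lemma as stated: the bar construction computes $\gR \otimes_\gS^{\bL} \gN$, and that this preserves weak equivalences between well-based modules is essentially tautological. The lemma asserts that the \emph{strict} tensor product $\gR \otimes_\gS \gN$ already computes the derived one, and your clause ``for a flat-enough situation $\simeq \gR \otimes_\gS \gN$'' is exactly the statement to be proved, so this route is circular.

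Your second approach is the paper's, but your description of $(\gR \otimes_\gS \gN)(n)$ as a colimit over ways to split $n$ into ``a part from $\gR$, a part cancelled against $\gS$, and a part from $\gN$'' is not right: the strict tensor product is the coequaliser of $\gR \otimes \gS \otimes \gN \rightrightarrows \gR \otimes \gN$, and there is no three-part splitting in the quotient. The paper makes the idea precise by filtering $\gR$ itself as a right $\gS$-module, rather than trying to decompose $\gR \otimes_\gS \gN$ directly: set $F_0 \gR = \gS$ and
\[
F_p \gR(n) = F_{p-1}\gR(n) \cup \mathrm{Im}\big((L^+)^{\wedge p} \wedge ((L\oplus L)^+)^{\wedge (n-p)/2} \to \gR(n)\big),
\]
the locus where at most $p$ points fail to be paired. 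One then checks a pushout of right $\gS$-modules
\[
\begin{tikzcd}
F_{p-2}\gR(p)[p] \otimes \gS \rar \dar & F_{p-1}\gR \dar \\
\gR(p)[p] \otimes \gS \rar & F_p \gR,
\end{tikzcd}
\]
whose left vertical map is a free $\gS$-module on precisely the closed cofibration supplied by the second part of Lemma~\ref{lem:cofibrancy}. Applying $- \otimes_\gS \gN$ preserves the pushout, the left map remains a cofibration because $\gN$ is well-based, and induction on $p$ together with the gluing lemma gives flatness of each $F_p\gR$; as $F_p\gR \to \gR$ is an isomorphism in gradings $<p$, this suffices. So your instinct that Lemma~\ref{lem:cofibrancy} is the crucial input, and that the argument ends with an iterated pushout along cofibrations plus the gluing lemma, is correct --- but the cellular structure should be built on $\gR$ as an $\gS$-module, not assembled ad hoc on each $(\gR \otimes_\gS \gN)(n)$.
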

\begin{proof}
Recall that $\gR(n) = [(L^+)^{\wedge n}]_{\fS_n}$. Define a filtration of $\gR$ by $F_0 \gR = \gS$ and
$$F_p \gR(n) := F_{p-1} \gR(n) \cup \mathrm{Im}\big( (L^+)^{\wedge p} \wedge ((L \oplus L)^+)^{\wedge (n-p)/2}  \to \gR(n)\big),$$
where the latter term is only taken when it makes sense: for $n-p$ even. This is a filtration by right $\gS$-modules. One checks that the diagram
\begin{equation*}
\begin{tikzcd}
 F_{p-2} \gR(p)[p] \otimes \gS  \rar \dar & F_{p-1}\gR \dar\\
 \gR(p)[p] \otimes \gS \rar & F_{p}\gR
\end{tikzcd}
\end{equation*}
is a pushout (in $\mathsf{Top}_*^\bN$ and so in right $\gS$-modules), where the horizontal maps are induced by the $\gS$-module structure and the adjoints of the map $inc: F_{p-2}\gR(p) \to F_{p-1}\gR(p)$, and the map $id : \gR(p) \to F_p \gR(p)$.

We prove by induction on $p$ that $F_p \gR$ is a flat $\gS$-module in the indicated sense. As $F_0 \gR = \gS$ these properties hold for $p=0$. For $\gM$ a left $\gS$-module whose underlying object is well-based, applying $- \otimes_\gS \gM$ to the square above gives a pushout square 
\begin{equation}\label{eq:Filt}
\begin{tikzcd}
F_{p-2} \gR(p)[p] \otimes \gM  \rar \dar & F_{p-1}\gR \otimes_\gS \gM \dar\\
\gR(p)[p] \otimes \gM \rar & F_{p}\gR\otimes_\gS \gM.
\end{tikzcd}
\end{equation}
The map $F_{p-2} \gR(p) \to \gR(p)$ is the inclusion of the subspace of those $p$-tuples of points in $M$ labelled by $L$ which do not have distinct $M$ coordinates, so is a closed cofibration from a well-based space by the second part of Lemma \ref{lem:cofibrancy}. As $\gM$ is assumed well-based, the left-hand vertical map in \eqref{eq:Filt} is a closed cofibration in each grading, and so this square is also a homotopy pushout. A weak equivalence $f : \gM \overset{\sim}\to \gM'$ then induces a map of homotopy pushout squares which is a weak equivalence on all but the bottom right corner, by inductive assumption, so also induces a weak equivalence on this corner.

Thus each $F_p \gR$ is flat in the indicated sense, so $\gR$ is too because $F_p \gR \to \gR$ is an isomorphism when evaluated on $n < p$, so $F_p \gR \otimes_\gS \gM \to \gR\otimes_\gS \gM$ is too.
\end{proof}

\begin{rem}
In the case that $L$ is the $0$-dimensional vector bundle, the filtration stage $F_p\gR(n)$ consists of those elements in the $n$th based symmetric power $[(M^+)^{\wedge n}]_{\fS_n}$ containing at most $p$ unrepeated elements. Up to reindexing, this is the same as the filtration used by Arnol'd \cite{Arnold} and by Segal \cite{Segal}.
\end{rem}

\begin{lem}\label{lem:tensor}
The induced map $\gR \otimes_\gS S^{0}[0] \to \gC(M;L)$ is an isomorphism.
\end{lem}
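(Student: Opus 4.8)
The plan is to compute the relative tensor product $\gR \otimes_\gS S^0[0]$ explicitly at each grading $n$ and match it with $C_n(M;L)^+$. Since $S^0[0]$ is the unit augmentation (the augmentation $\gS \to S^0[0]$ kills everything in positive grading), tensoring over $\gS$ amounts to taking the coequalizer of the two maps $\gR \otimes \gS \rightrightarrows \gR$ given by the $\gS$-action and by $\mathrm{id}_\gR \otimes \epsilon$; that is, we are forming the quotient of $\gR$ by the right $\gS$-module structure maps in positive $\gS$-degree.

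First I would unwind what the $\gS$-action on $\gR$ does concretely. We have $\gR(n) = [(L^+)^{\wedge n}]_{\fS_n}$, and $\gS$ is generated in grading $2$ by $[(L\oplus L)^+]_{\fS_2}$, included into $\gR(2) = [L^+\wedge L^+]_{\fS_2}$ as the ``diagonal'' locus of pairs of labelled points sharing the same projection to $M$. Multiplying an $n$-tuple by such a generator superimposes two further labelled points that lie over a common point of $M$. Therefore the image of the $\gS$-action in positive degree, inside $\gR(n)$, is exactly the subspace of $[(L^+)^{\wedge n}]_{\fS_n}$ consisting of those $n$-tuples which fail to have distinct $M$-coordinates — precisely the ``fat diagonal'' locus, which by Lemma~\ref{lem:cofibrancy} is a well-based subspace included by a closed cofibration. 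Collapsing this subspace to the basepoint yields exactly $C_n(M;L)^+$ by its definition. So at the level of underlying $\bN$-graded pointed sets/spaces the coequalizer is $\gR(n)/(\text{fat diagonal locus}) = C_n(M;L)^+$.

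The one genuinely careful point is that the relative tensor product $\gR \otimes_\gS S^0[0]$, as a colimit (reflexive coequalizer) in $\mathsf{Top}_*^\bN$, really is computed as this set-level quotient, i.e.\ that there is no hidden identification beyond collapsing the fat diagonal. Here the filtration of Lemma~\ref{lem:cof} does the work: applying $-\otimes_\gS S^0[0]$ to the pushout squares defining $F_p\gR$ (the case $\gM = S^0[0]$ of \eqref{eq:Filt}), the term $F_{p-2}\gR(p)[p]\otimes S^0[0]$ and $\gR(p)[p]\otimes S^0[0]$ are just $F_{p-2}\gR(p)[p]$ and $\gR(p)[p]$ themselves (since $S^0[0]\otimes -$ is the identity functor on $\mathsf{Top}_*^\bN$), so inductively $F_p\gR \otimes_\gS S^0[0] = F_{p-1}\gR\otimes_\gS S^0[0] \cup_{F_{p-2}\gR(p)[p]} \gR(p)[p]$, which is visibly the quotient collapsing the next layer of the fat diagonal. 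Passing to the colimit over $p$ gives $\gR \otimes_\gS S^0[0] \cong \gC(M;L)$ in each grading. I expect the main obstacle to be purely bookkeeping: checking that the superposition maps assembling $\gR$ and the inclusion $\Delta$ are compatible with the $\fS_n$-quotients so that the image of the positive-degree $\gS$-action is identified on the nose with the fat diagonal locus, and that the identification $\gR(n)/(\text{fat diagonal}) = C_n(M;L)^+$ matches the structure maps (the superposition product). Finally, one should remark that this is an isomorphism of unital commutative monoids, not merely of $\bN$-graded pointed spaces, which is automatic since all the maps in sight are maps of monoids and the colimit is computed in commutative monoids as it is in $\mathsf{Top}_*^\bN$.
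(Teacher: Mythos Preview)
Your proof is correct and follows the same idea as the paper: write $\gR \otimes_\gS S^0[0]$ as the coequaliser of the action map and $\mathrm{id}\otimes\epsilon$, observe that this collapses precisely the image of the positive-degree $\gS$-action, and identify that image with the fat diagonal locus so that the quotient is $C_n(M;L)^+$ by definition. The paper's proof is three lines long and does exactly this; your third paragraph invoking the filtration from Lemma~\ref{lem:cof} is unnecessary here, since the coequaliser in $\mathsf{Top}_*$ is already explicitly the quotient by the image of $\gR\otimes\ker(\epsilon)\to\gR$, with no further identifications to worry about---the paper reserves the filtration argument for the genuinely homotopical content of Lemma~\ref{lem:cof}.
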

\begin{proof}
By definition of the relative tensor product there is a coequaliser diagram
\begin{equation*}
\begin{tikzcd}
\gR \otimes \gS  \arrow[r, shift left=.75ex, "\alpha"] \arrow[r,shift right=.75ex, swap, "\beta"]& \gR  \rar & \gR \otimes_\gS S^{0}[0]
\end{tikzcd}
\end{equation*}
in $\mathsf{Top}_*^\bN$, where $\alpha$ is given by the $\gS$-module structure on $\gR$, and $\beta$ is induced by the augmentation $\epsilon : \gS \to S^{0}[0]$. The image of $\gR \otimes \mathrm{ker}(\epsilon)(n) \to \gR(n) = [(L^+)^{\wedge n}]_{\fS_n}$ is precisely the image of $(L^+)^{\wedge n-2} \wedge (L \oplus L)^+ \to [(L^+)^{\wedge n}]_{\fS_n}$, whose cofibre is by definition $\gC(M;L)$.
\end{proof}

\begin{proof}[Proof of Theorem \ref{thm:MainC}]
Apply Lemma \ref{lem:cof} to the weak equivalence $B(\gS, \gS, S^{0}[0]) \overset{\sim}\to S^{0}[0]$, giving an equivalence $B(\gR, \gS, S^{0}[0]) \overset{\sim}\to \gR \otimes_\gS S^{0}[0]$, and the latter is isomorphic to $\gC(M)$ by Lemma \ref{lem:tensor}.
\end{proof}

\begin{rem}
It is possible to fool oneself into thinking that the above argument can be adapted to the case of ordered configuration spaces, considered in the category of symmetric sequences of pointed spaces, in order to prove a statement analogous to the equivalence \eqref{eq:BarConst} in this category. Unfortunately, that statement is false. One can verify this directly in the case $M=*$ with trivial 0-dimensional Euclidean bundle, in grading 3. If there is an analogue for ordered configuration spaces, its statement must be more complicated.
\end{rem}

\appendix

\section{Homological densities\\ by Quoc P.~Ho and Oscar Randal-Williams}

\subsection{Spaces of 0-cycles}
It is easy to generalise Theorem \ref{thm:MainC} to the following variant of configuration spaces, called ``spaces of 0-cycles'' by Farb--Wolfson--Wood \cite{FWW}. Let $m, k \geq 1$, and for $n_1, n_2, \ldots, n_m \in \bN$ let
$$Z^k_{n_1, \ldots, n_m}(M)  \subset \mathrm{Sym}_{n_1, \ldots, n_m}(M) := [M^{n_1}]_{\fS_{n_1}} \times [M^{n_2}]_{\fS_{n_2}} \times \cdots \times [M^{n_m}]_{\fS_{n_m}}$$
be the open subspace of those $(\{x_1^1, \ldots, x_{n_1}^1\}, \{x_1^2, \ldots, x_{n_2}^2\}, \ldots, \{x_1^m, \ldots, x_{n_m}^m\})$ such that no $x^i_j$ has multiplicity $\geq k$ in all of these $m$ multisets. That is, $Z^k_{n_1, \ldots, n_m}(M)$ is the configuration space of particles of $m$ different colours, $n_i$ having colour $i$, which may collide except that no point of $M$ may carry $\geq k$ points of every colour. The 1-point compactifications $Z_{n_1, \ldots, n_m}(M)^+$ again have a composition product
$$Z^k_{n_1, \ldots, n_m}(M)^+ \wedge Z^k_{n'_1, \ldots, n'_m}(M)^+ \lra Z^k_{n_1+n'_1, \ldots, n_m+n'_m}(M)^+,$$
giving a commutative monoid $\gZ^{m,k}(M)$ in $\bN^m$-graded pointed spaces. Just as before, we can introduce labels in a vector bundle $L \to M$, giving $Z^k_{n_1, \ldots, n_m}(M;L)$ and $\gZ^{m,k}(M;L)$. Writing $1_i = (0,\ldots, 0,1,0\ldots,0) \in \bN^m$ with the 1 in the $i$th position, there is a pushout square
\begin{equation}\label{eq:PushoutZmk}
\begin{tikzcd}
\Com([(L^{\oplus mk})^+]_{\fS_k^m}[k,k,\ldots,k]) \rar{\epsilon} \dar{\Delta} & S^{0}[0, \ldots, 0] \dar \\
{\displaystyle \Com(\bigvee_{i=1}^m L^+[1_i])} \rar &\gZ^{m,k}(M;L)
\end{tikzcd}
\end{equation}
of unital commutative monoids in $\mathsf{Top}_*^{\bN^m}$, where $\Delta$ is now induced by the inclusion $[(L^{\oplus mk})^+]_{\fS_k^m} \to [(L^+)^{\wedge k}]
_{\fS_k} \wedge \cdots \wedge [(L^+)^{\wedge k}]
_{\fS_k} = \Com(\bigvee_{i=1}^m L^+[1_i])(k,\ldots,k)$. The same argument as Theorem \ref{thm:MainC} shows that there is an equivalence
\begin{equation}\label{eq:HomDensity}
\Com(\vee_{i=1}^m L^+[1_i]) \otimes^\bL_{\Com([(L^{\oplus mk})^+]_{\fS_k^m}[k,\ldots,k])} S^{0}[0, \ldots,0] \overset{\sim}\lra  \gZ^{m,k}(M;L).
\end{equation}

\subsection{Revisiting homological densities}
This can be used to revisit the work of Farb--Wolfson--Wood \cite{FWW} and Ho \cite{Ho} on homological densities, and in particular to explain coincidences of homological densities at the level of topology rather than algebra, as proposed in \cite[1.5.1]{Ho}.

The spaces $Z^k_{n_1, \ldots, n_m}(M;L)$ are $\bQ$-homology manifolds, being open subspaces of a product of coarse moduli spaces $[L^n]_{\fS_n}$ of the orbifolds $L^n /\!\!/ \fS_n$. As before, we suppose $M$ is $d$-dimensional and take $L=W$ to be given by the sum of the orientation line of $M$ plus $(d-1)$ trivial lines: then the $Z^k_{n_1, \ldots, n_m}(M;W)$ are orientable $\bQ$-homology manifolds, of dimension $2d \cdot\sum n_i$. Again they are vector bundles over $Z^k_{n_1, \ldots, n_m}(M)$, so Poincar{\'e} duality gives
$$H^*(Z^k_{n_1, \ldots, n_m}(M)) \cong H^*(Z^k_{n_1, \ldots, n_m}(M;W)) \cong\widetilde{H}_{2d\sum n_i - *}(Z^k_{n_1, \ldots, n_m}(M;W)^+).$$
On the other hand, the bar construction formula above together with the argument of Section \ref{sec:Knudsen} identifies the multigraded vector space $H_{*,*}(\gZ^{m,k}(M;W))$ with
$$\mathrm{Tor}_*^{S^*(\widetilde{H}_*([(W^{\oplus mk})^+]_{\fS_k^m})[k,\ldots,k])}(S^*(\bigoplus_{i=1}^m\widetilde{H}_*(W^+)[1_i]), \bQ[0,\ldots,0]).$$

\subsubsection{Odd-dimensional manifolds}\label{sec:odddim}
As in Section \ref{sec:Knudsen} we have $\widetilde{H}_*([(W^{\oplus mk})^+]_{\fS_k^m}) \cong [\Sigma^{dmk} \widetilde{H}_*(M^+)]_{\fS_k^m}$ by the Thom isomorphism. If $d$ is odd then the permutation group $\fS_k^m$ acts on the Thom class via $\fS_k^m\leq \fS_{mk} \overset{sign}\to \bZ^\times$, so acts nontrivially if $k \geq 2$ and trivially if $k=1$. If $k \geq 2$ this means that $\widetilde{H}_*([(W^{\oplus mk})^+]_{\fS_k^m})=0$, showing that
$$H_{*,*}(\Com(\vee_{i=1}^m W^+[1_i])) \overset{\sim}\lra H_{*,*}(\gZ^{m,k}(M;W))$$
in this case. Using Poincar{\'e} duality on both sides gives \cite[Theorem 1.4]{FWW}, except that that theorem is erroneously claimed for all $k \geq 1$. We will return to the case $k=1$ below.

\subsubsection{Even-dimensional manifolds}
If $d$ is even then $\fS_k^m$ acts trivially on $\Sigma^{dmk} \widetilde{H}_*(M^+)$, and using the Thom isomorphism to identify $\widetilde{H}_*(W^+) \cong \Sigma^d \widetilde{H}_*(M^+)$ too, the Koszul complex for computing the $\mathrm{Tor}$-groups above is
$$(S^*\big(\bigoplus_{i=1}^m\Sigma^d \widetilde{H}_*(M^+; \bQ^{w_1})[1_i] \oplus \Sigma^{dmk+1}\widetilde{H}_*(M^+ ; (\bQ^{w_1})^{\otimes mk})[k,\ldots,k]\big), \partial).$$
The differential $\partial$ is induced by the map
$$\Sigma^{dmk}\widetilde{H}_*(M^+ ; (\bQ^{w_1})^{\otimes mk}) \to S^k(\Sigma^d \widetilde{H}_*(M^+; \bQ^{w_1})) \otimes \cdots \otimes S^k(\Sigma^d \widetilde{H}_*(M^+; \bQ^{w_1}))$$
obtained by linearly dualising the cup product map
\begin{equation}\label{eq:CupProd}
H^*_c(M ; \bQ^{w_1})^{\otimes mk} \lra  H^*_c(M ; (\bQ^{w_1})^{\otimes mk}),
\end{equation}
and so is trivial if (and only if) all $mk$-fold cup products of ($w_1$-twisted) compactly-supported cohomology classes on $M$ vanish. 

When this cup product map is trivial, so $\partial$ is trivial, the above just gives a formula for $H_{*,*}(\gZ^{m,k}(M;W))$. Using Poincar{\'e} duality, and reindexing, to express this in terms of $H^*(Z^k_{n_1, \ldots, n_m}(M))$ and $H^*(\mathrm{Sym}_{n_1, \ldots, n_m}(M))$ we obtain an identity of multigraded vector spaces
$$H^*(Z^k_{\bullet}(M)) \cong H^*(\mathrm{Sym}_{\bullet}(M)) \otimes S^*( \Sigma^{d(mk-1)-1}{H}^*(M ; (\bQ^{w_1})^{\otimes mk-1})[k,\ldots,k]).$$
There are stabilisation maps $\sigma_i : H^*(Z^k_{n_1, \ldots, n_m}(M)) \to H^*(Z^k_{n_1, \ldots,n_i+1,\ldots, n_m}(M))$ analogous to those constructed in Section \ref{sec:HomStab}, similarly for $H^*(\mathrm{Sym}_{n_1, \ldots, n_m}(M))$, and both stabilise as $n_j \to \infty$, just as in Section \ref{sec:HomStab}: this recovers \cite[Theorem 1.7]{FWW}. We may take the colimit of all these stabilisations to obtain 
$$H^*(Z^k_{\infty, \ldots, \infty}(M)) \cong H^*(\mathrm{Sym}_{\infty, \ldots, \infty}(M)) \otimes S^*( \Sigma^{d(mk-1)-1}{H}^*(M ; (\bQ^{w_1})^{\otimes mk-1})).$$
Writing $P_{Z^{m,k}}(t)$ and $P_{Sym^m}(t)$ for the Poincar{\'e} series of $H^*(Z^k_{\infty, \ldots, \infty}(M)) $ and $ H^*(\mathrm{Sym}_{\infty, \ldots, \infty}(M))$ respectively, this discussion identifies the \emph{homological density} ${P_{Z^{m,k}}(t)}/{P_{Sym^m}(t)}$ with the Poincar{\'e} series of $S^*( \Sigma^{d(mk-1)-1}{H}^*(M ; (\bQ^{w_1})^{\otimes mk-1}))$. This visibly only depends on the product $mk$, giving ``coincidences between homological densities'': this recovers \cite[Theorem 1.2]{FWW}; in fact it also recovers the stronger Theorem 3.6 of that paper.

\subsubsection{Odd-dimensional manifolds, $k=1$} Just as in the even-dimensional case, if the cup product map \eqref{eq:CupProd} is zero then one gets an explicit description of $H^*(Z_{\bullet}(M))$, and the homological density is given by the Poincar{\'e} series of the graded vector space $S^*( \Sigma^{d(m-1)-1}{H}^*(M ; (\bQ^{w_1})^{\otimes m-1}))$. It follows from Section \ref{sec:odddim} that the homological density is $1$ for $k > 1$, so for odd-dimensional manifolds it is \emph{not true} that the homological density depends only on $mk$.

\subsubsection{Euler characteristic}
If the cup product map \eqref{eq:CupProd} is not zero, and either $d$ is even or $d$ is odd and $k=1$, then there is instead a nontrivial differential on the multigraded vector space
$$H^*(\mathrm{Sym}_{\bullet}(M)) \otimes S^*( \Sigma^{d(mk-1)-1}{H}^*(M ; (\bQ^{w_1})^{\otimes mk-1})[k,\ldots,k]),$$
of degree $(+1,0)$, whose homology is $H^*(Z_{*, \ldots, *}(M))$. Then one would not expect $\frac{P_Z(t)}{P_{Sym}(t)}$ to agree with the Poincar{\'e} series of $S^*( \Sigma^{d(mk-1)-1}{H}^*(M ; (\bQ^{w_1})^{\otimes mk-1}))$, and indeed it does not \cite[Remark 1.6]{FWW}. However, as Euler characteristic commutes with taking homology we have the identity
$$\sum_{n_1, \ldots, n_m \geq 0}  \chi(Z_{n_1, \ldots, n_m}(M)) s_1^{n_1} \cdots s_m^{n_m} = \big(\prod_{i=1}^m (1-s_i)\big)^{-\chi(M)} \cdot (1-(s_1 \cdots s_m)^k)^{\chi(M, (\bQ^{w_1})^{\otimes mk-1})}$$
in $\bZ[[s_1, \ldots, s_m]]$. The left-hand factor is $\sum_{n_1, \ldots, n_m \geq 0}  \chi(\mathrm{Sym}_{n_1, \ldots, n_m}(M)) s_1^{n_1} \cdots s_m^{n_m}$. This recovers \cite[Theorem 1.9 1.]{FWW}. 

\subsection{Spectral densities}

The construction of homological densities can be promoted to the level of spectra, addressing \cite[1.5.1]{Ho}, as follows. Let us \emph{assume that $M$ is even-dimensional and orientable}: then we can dispense with twisting by the vector bundle $W \to M$. We consider $\gZ^{m,k}(M)$ with its $\bN^m$-grading reduced to an $\bN$-grading via $\text{sum} : \bN^m \to \bN$. Collapsing the complement of a small neighbourhood of a point in $M$ gives a map $M^+ \to S^{d}$, inducing a map of commutative monoids
$$\Com(\bigvee_{i=1}^m M^+ [1]) \lra \Com(S^d[1]).$$
If $X$ is a left $\Com(S^d[1])$-module, it is equipped with maps $S^d \wedge X(n) \to X(n+1)$ and so we can define the spectrum $\overline{X} := \hocolim_{n \to \infty} S^{-nd} \wedge \Sigma^\infty X(n)$. Using these two constructions we may therefore form the spectrum
$$\Delta^{m,k} := \overline{\Com(S^d[1]) \otimes^\bL_{\Com(\bigvee_{i=1}^m M^+ [1])} \gZ^{m,k}(M)}.$$

By analogy with \cite[Section 7.5]{Ho} we propose $\Delta^{m,k}$ as a spectral form of the stable density of $Z^k_{n_1, \ldots, n_m}(M)$ in $\mathrm{Sym}_{n_1, \ldots, n_m}(M)$. At the level of $\bQ$-chains it recovers the construction from the proof of Theorem 7.5.1 of \cite{Ho}. We can prove the spectral form of that theorem analogously: as $\bN$-graded objects, there is an evident map from \eqref{eq:PushoutZmk} to the analogous square for $\gZ^{1,mk}(M)$ which induces a map of spectra $\Delta^{m,k} \to \Delta^{1,mk}$, and this is an equivalence by \eqref{eq:HomDensity} as both are identified with $\overline{\Com(S^d[1]) \otimes^\bL_{\Com(M^+ [mk])} S^0[0]}$. 

This may be simplified for $mk \geq 2$ as follows. The map $M^+ \to [(S^d)^{\wedge mk}]_{\fS_{mk}}$ with which the derived tensor product is formed factors over $(S^d)^{\wedge mk}$ so is nullhomotopic when $mk \geq 2$, and so $\Com(S^d[1]) \otimes^\bL_{\Com(M^+ [mk])} S^0[0] $ is equivalent to $ \Com(S^d[1]) \otimes (S^0[0] \otimes^\bL_{\Com(M^+ [mk])} S^0[0])$ as a left $\Com(S^d[1])$-module. In this situation the $\overline{(-)}$ construction gives 
\begin{align*}
\Delta^{m,k} &\simeq \bigvee_{n \geq 0} S^{-nd} \wedge \Sigma^\infty(S^0[0] \otimes^\bL_{\Com(M^+ [mk])} S^0[0])(n)\\
&\simeq \bigvee_{n \geq 0} S^{-nd} \wedge \Sigma^\infty\Com(S^1 \wedge M^+[mk])(n).
\end{align*}

\bibliographystyle{amsalpha}
\bibliography{MainBib}

\providecommand{\bysame}{\leavevmode\hbox to3em{\hrulefill}\thinspace}
\providecommand{\MR}{\relax\ifhmode\unskip\space\fi MR }
\providecommand{\MRhref}[2]{%
  \href{http://www.ams.org/mathscinet-getitem?mr=#1}{#2}
}
\providecommand{\href}[2]{#2}
\begin{thebibliography}{FWW19}

\bibitem[Arn70]{Arnold}
V.~I. Arnol'd, \emph{Certain topological invariants of algebraic functions},
  Trudy Moskov. Mat. Ob\v{s}\v{c}. \textbf{21} (1970), 27--46.

\bibitem[Ban]{Banerjee2}
O.~Banerjee, \emph{Stability in cohomology via the symmetric simplicial
  category}, in preparation.

\bibitem[Ban23]{Banerjee}
\bysame, \emph{Filtration of cohomology via symmetric semisimplicial spaces},
  \url{https://arxiv.org/abs/1909.00458v3}, 2023.

\bibitem[BCM93]{BCM}
C.-F. B\"{o}digheimer, F.~R. Cohen, and R.~J. Milgram, \emph{Truncated
  symmetric products and configuration spaces}, Math. Z. \textbf{214} (1993),
  no.~2, 179--216.

\bibitem[Bia20]{BianchiSplit}
A.~Bianchi, \emph{Splitting of the homology of the punctured mapping class
  group}, J. Topol. \textbf{13} (2020), no.~3, 1230--1260.

\bibitem[BM14]{BenderskyMiller}
M.~Bendersky and J.~Miller, \emph{Localization and homological stability of
  configuration spaces}, Q. J. Math. \textbf{65} (2014), no.~3, 807--815.

\bibitem[BY21]{BerceanuYameen}
B.~Berceanu and M.~Yameen, \emph{Strong and shifted stability for the
  cohomology of configuration spaces}, Bull. Math. Soc. Sci. Math. Roumanie
  (N.S.) \textbf{64(112)} (2021), no.~2, 159--191.

\bibitem[Chu12]{Church}
T.~Church, \emph{Homological stability for configuration spaces of manifolds},
  Invent. Math. \textbf{188} (2012), no.~2, 465--504.

\bibitem[CP15]{CanteroPalmer}
F.~Cantero and M.~Palmer, \emph{On homological stability for configuration
  spaces on closed background manifolds}, Doc. Math. \textbf{20} (2015),
  753--805.

\bibitem[FWW19]{FWW}
B.~Farb, J.~Wolfson, and M.~M. Wood, \emph{Coincidences between homological
  densities, predicted by arithmetic}, Adv. Math. \textbf{352} (2019),
  670--716.

\bibitem[Get99a]{getzler}
E.~Getzler, \emph{The homology groups of some two-step nilpotent {L}ie algebras
  associated to symplectic vector spaces},
  \url{https://arxiv.org/abs/math/9903147}, 1999.

\bibitem[Get99b]{getzler2}
\bysame, \emph{Resolving mixed {H}odge modules on configuration spaces}, Duke
  Math. J. \textbf{96} (1999), no.~1, 175--203.

\bibitem[Ho20]{Ho2}
Q.~P. Ho, \emph{Higher representation stability for ordered configuration
  spaces and twisted commutative factorization algebras},
  \url{https://arxiv.org/abs/2004.00252}, 2020.

\bibitem[Ho21]{Ho}
\bysame, \emph{Homological stability and densities of generalized configuration
  spaces}, Geom. Topol. \textbf{25} (2021), no.~2, 813--912.

\bibitem[Kal98]{Kallel}
S.~Kallel, \emph{Divisor spaces on punctured {R}iemann surfaces}, Trans. Amer.
  Math. Soc. \textbf{350} (1998), no.~1, 135--164.

\bibitem[KM15]{KupersMiller}
A.~Kupers and J.~Miller, \emph{Improved homological stability for configuration
  spaces after inverting 2}, Homology Homotopy Appl. \textbf{17} (2015), no.~1,
  255--266.

\bibitem[KMT23]{KnudsenMillerTosteson}
B.~Knudsen, J.~Miller, and P.~Tosteson, \emph{Extremal stability for
  configuration spaces}, Mathematische Annalen \textbf{386} (2023), no.~3,
  1695--1716.

\bibitem[Knu17]{Knudsen}
B.~Knudsen, \emph{Betti numbers and stability for configuration spaces via
  factorization homology}, Algebr. Geom. Topol. \textbf{17} (2017), no.~5,
  3137--3187.

\bibitem[Loo23]{Looijenga}
E.~Looijenga, \emph{Torelli group action on the configuration space of a
  surface}, J. Topol. Anal. \textbf{15} (2023), no.~1, 215--222.

\bibitem[Mil69]{Milgram}
R.~J. Milgram, \emph{The homology of symmetric products}, Trans. Amer. Math.
  Soc. \textbf{138} (1969), 251--265.

\bibitem[Pet20]{Petersen}
D.~Petersen, \emph{Cohomology of generalized configuration spaces}, Compos.
  Math. \textbf{156} (2020), no.~2, 251--298.

\bibitem[RW13]{RWConf}
O.~Randal-Williams, \emph{Homological stability for unordered configuration
  spaces}, Q. J. Math. \textbf{64} (2013), no.~1, 303--326.

\bibitem[Seg79]{Segal}
G.~Segal, \emph{The topology of spaces of rational functions}, Acta Math.
  \textbf{143} (1979), no.~1-2, 39--72.

\bibitem[Sta23a]{Stavrou}
A.~Stavrou, \emph{Cohomology of configuration spaces of surfaces as mapping
  class group representations}, Trans. Amer. Math. Soc. \textbf{376} (2023),
  no.~4, 2821--2852.

\bibitem[Sta23b]{StavrouThesis}
\bysame, \emph{Homology of configuration spaces of surfaces as mapping class
  group representations}, Ph.D. thesis, University of Cambridge, 2023.

\bibitem[Yam23]{Yameen}
M.~Yameen, \emph{A remark on extremal stability of configuration spaces}, J.
  Pure Appl. Algebra \textbf{227} (2023), no.~1, Paper No. 107154, 5.

\end{thebibliography}

\end{document}